\newtheorem{theo}{Theorem}
\newtheorem{lemm}{Lemma}
\newtheorem{defi}{Definition}
\newtheorem{coro}{Corollary}
\newtheorem{exam}{Example}
\begin{document}

\baselineskip 20pt

\title{The radical of an n-absorbing ideal}
\author{Hyun Seung Choi and Andrew Walker}
\date{}
\maketitle

\begin{abstract}
In this note we show that in a commutative ring $R$ with unity, for any $n > 0$, if $I$ is an $n$-absorbing ideal of $R$, then $(\sqrt{I})^{n} \subseteq I$.
\end{abstract}

\begin{defi}
{\em An ideal $I$ of a commutative ring $R$ is
said to be {\bf n-absorbing} if whenever
$a_{1} \cdots a_{n+1} \in I$
for
$a_{1},\ldots,a_{n+1}
\in R$, then
$a_{1}\cdots a_{i-1}a_{i+1}\cdots a_{n+1} \in I$
for some $i \in \{1, 2, \ldots, n+1\}$.
}
\end{defi}

In \cite[Theorem $2.1(e)$]{Anderson},
it is shown that if
$a \in \sqrt{I} \text{ and } I
\text{ is } n\text{-absorbing, then } a^{n} \in I$.
Conjecture 2
in \cite[page 1669]{Anderson}
states that
more generally,
if $I$ is $n$-absorbing, then
$(\sqrt{I}\:)^{n} \subseteq I$.
That is, if $a_1, a_2, \ldots, a_n \in \sqrt{I}$,
then $a_1a_2 \cdots a_n \in I$.
The object of
this note is to prove this conjecture.

Throughout, all rings will be assumed to be commutative and unital. If
$n$ is a positive integer, we'll consider $\mathbb{N}^{n}_{0}$ as a
partially ordered set with the lexicographic ordering. That is, if
$\alpha,\beta \in \mathbb{N}^{n}_{0}$, then $\alpha \geq_{\text{lex}}
\beta$ if the leftmost non-zero coordinate of $\alpha - \beta$ is
non-negative.

Our first
observation is that when considering the problem
of when $(\sqrt{I}\:)^n \subseteq I$ for $I$ $n$-absorbing,
we may assume without
loss of generality that $I = 0$.

\begin{lemm}
Suppose $(\sqrt{0})^{n} = 0$ in any ring such that $0$ is
$n$-absorbing. Then for an $n$-absorbing ideal $I$ in an arbitrary ring
$R$, $(\sqrt{I})^{n} \subseteq I$.

\end{lemm}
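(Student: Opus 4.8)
The natural strategy is to reduce to the case $I = 0$ by passing to the quotient ring $\bar{R} := R/I$, applying the hypothesis there, and then pulling the conclusion back to $R$. Let $\pi : R \to \bar{R}$ be the canonical surjection, so that $\ker \pi = I$.

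The first step I would carry out is to check that the zero ideal of $\bar{R}$ is again $n$-absorbing. Suppose $\pi(a_1) \cdots \pi(a_{n+1}) = 0$ in $\bar{R}$ for some $a_1, \ldots, a_{n+1} \in R$. This says precisely that $a_1 \cdots a_{n+1} \in I$, so the $n$-absorbing property of $I$ yields an index $i$ with $a_1 \cdots a_{i-1} a_{i+1} \cdots a_{n+1} \in I$; applying $\pi$ gives $\pi(a_1) \cdots \pi(a_{i-1}) \pi(a_{i+1}) \cdots \pi(a_{n+1}) = 0$. Hence $0$ is $n$-absorbing in $\bar{R}$, and the hypothesis applies to give $(\sqrt{0})^{n} = 0$ in $\bar{R}$.

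Next I would identify the nilradical of $\bar{R}$ with the image of $\sqrt{I}$ under $\pi$, using the standard correspondence $\sqrt{0}_{\bar{R}} = \pi(\sqrt{I}) = \sqrt{I}/I$: an element $\bar{x} \in \bar{R}$ is nilpotent if and only if $x^{m} \in I$ for some $m > 0$, which holds if and only if $x \in \sqrt{I}$. Combining this with the previous step gives $(\pi(\sqrt{I}))^{n} = 0$ in $\bar{R}$. Finally, since $\pi$ is a ring homomorphism, forming the $n$th power of an ideal commutes with $\pi$, so $(\pi(\sqrt{I}))^{n} = \pi((\sqrt{I})^{n})$; thus $\pi((\sqrt{I})^{n}) = 0$, i.e. $(\sqrt{I})^{n} \subseteq \ker \pi = I$, which is the desired conclusion.

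The argument is essentially bookkeeping about how radicals and ideal powers behave under quotients, and I do not anticipate a genuine obstacle. The only point requiring a little care is the identification $\sqrt{0}_{\bar{R}} = \sqrt{I}/I$ together with the compatibility $\pi((\sqrt{I})^{n}) = (\pi(\sqrt{I}))^{n}$, which together ensure that the hypothesis stated for the zero ideal transfers cleanly to the conclusion for $I$. The real content of the lemma is organizational: it licenses us to assume $I = 0$ when proving the main theorem.
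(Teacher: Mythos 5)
Your proposal is correct and follows essentially the same route as the paper: pass to $R/I$, note that $0$ is $n$-absorbing there (the paper cites \cite[Theorem 4.2(a)]{Anderson} where you verify it directly, which is fine), apply the hypothesis, and transfer back along the canonical map. The only cosmetic difference is that you push ideals forward via $\pi$ (using $\sqrt{0} = \pi(\sqrt{I})$ and $\pi((\sqrt{I})^{n}) = (\pi(\sqrt{I}))^{n}$) while the paper pulls back via $f^{-1}$ (using $\sqrt{I} = f^{-1}(\sqrt{0})$ and $(f^{-1}(\sqrt{0}))^{n} \subseteq f^{-1}((\sqrt{0})^{n})$); both are valid bookkeeping for the same reduction.
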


\begin{proof}
Let $R' = R/I$. Then $0$ is $n$-absorbing in $R'$ \cite[Theorem $4.2
(a)$]{Anderson}, so that $(\sqrt{0})^{n} = 0$. Let $f \colon R \to R'$
be the canonical map. Then $(\sqrt{I})^{n} = (f^{-1}(\sqrt{0}))^{n}
\subseteq f^{-1}((\sqrt{0})^{n}) = f^{-1}(0) = I$.
\end{proof}

\begin{lemm}
If $I$ is an $n$-absorbing ideal in a ring $R$ and $k \geq n$, then $I$
is a $k$-absorbing ideal of $R$.

\end{lemm}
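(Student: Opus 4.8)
The plan is to reduce everything to the single inductive step $k = n+1$. Since $k \geq n$, it suffices to show that every $n$-absorbing ideal is $(n+1)$-absorbing; applying this implication repeatedly then yields that $I$ is $k$-absorbing for all $k \geq n$. So I would fix elements $a_1, \ldots, a_{n+2} \in R$ with $a_1 \cdots a_{n+2} \in I$, and aim to produce an index $i \in \{1, \ldots, n+2\}$ for which the product of the remaining $n+1$ factors lies in $I$.

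The key move is to regroup the $n+2$ factors into $n+1$ factors by treating $a_{n+1}a_{n+2}$ as a single element. Then $a_1 \cdots a_n (a_{n+1}a_{n+2}) \in I$ is a product of $n+1$ elements of $R$, so the $n$-absorbing hypothesis applies and the product of some $n$ of these $n+1$ elements lies in $I$. There are two cases. If the omitted element is one of $a_1, \ldots, a_n$, say $a_i$, then the surviving product is exactly $a_1 \cdots a_{i-1} a_{i+1} \cdots a_n a_{n+1} a_{n+2}$, which omits precisely $a_i$ from the original list of $n+2$ factors, and we are done. If instead the omitted element is the combined factor $a_{n+1}a_{n+2}$, then $a_1 \cdots a_n \in I$, and since $I$ is an ideal I may multiply by $a_{n+1}$ to obtain $a_1 \cdots a_n a_{n+1} \in I$, which omits precisely $a_{n+2}$; again we are done.

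The only real subtlety --- and the step I would flag as the main point --- is the second case, where the $n$-absorbing property returns a product of only $n$ factors rather than the $n+1$ factors demanded by the $(n+1)$-absorbing conclusion. The resolution is to exploit that $I$ absorbs multiplication by arbitrary ring elements, padding the short product $a_1 \cdots a_n$ back up by reintroducing the missing factor $a_{n+1}$. With this observation in place the argument is immediate, and an induction on $k$ (or equivalently iterating the $(n+1)$-absorbing step $k-n$ times) completes the proof.
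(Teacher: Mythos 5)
Your proof is correct. The paper does not actually prove this lemma --- it simply cites \cite[Theorem 2.1(b)]{Anderson} --- and your argument is the standard one behind that citation: reduce to the step from $n$-absorbing to $(n+1)$-absorbing, regroup $a_{n+1}a_{n+2}$ as a single factor so the $n$-absorbing hypothesis applies to $n+1$ factors, and in the case where the combined factor is the one omitted, use the ideal property to multiply $a_1\cdots a_n$ back up to $a_1\cdots a_{n+1}$. Both cases are handled correctly and the iteration up to general $k \geq n$ is immediate, so your proposal supplies a valid self-contained proof where the paper defers to the literature.
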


\begin{proof}
\cite[Theorem $2.1(b)$]{Anderson}.
\end{proof}
Next we develop a technical result
involving linear maps.
If $m$
is any positive integer and $R$ any ring, then $e_{j} \in R^{m}$ refers
to the $j$-th canonical basis element $e_{j} =
[0,\ldots,1,\ldots,0]^t$
of $R^m$ (where the $t$ denotes transpose).
We denote by
$\pi_{j} : R^{m} \to R$
the canonical projection for each
$j = 1,\ldots,m$.

\begin{defi}
\normalfont
Let $R$ be a ring, $m \in \mathbb{N}$ and $\varphi \colon R^{m} \to
R^{m}$ an $R$-linear map. We'll say that $\varphi$ is
\textbf{projectively zero} if for any $v \in R^{m}$, $\pi_{i}\varphi(v)
= 0$ for some $i=1,\ldots,m$.

\end{defi}

In the following example,
we establish
a relationship between projectively zero maps and $n$-absorbing ideals.
Let's consider the simplest interesting case, when $0$ is a
$2$-absorbing ideal. We wish to show that
$(\sqrt{0}\,)^{2} = 0$.
That is, if $a,b \in \sqrt{0}$, then
$ab = 0$.
Consider
the
matrix \[ \left[ \begin{array}{cc}
ab & b^2 \\
a^2 & ab \\
\end{array} \right]. \]
We claim that this matrix represents a projectively zero map $\varphi
\colon R^{2} \to R^{2}$.
By Theorem
\cite[Theorem 2.1e]{Anderson},
we
know that $a^{2} =
b^{2} = 0$.
So that
the above matrix simplifies to
\begin{equation}
\label{equation1}
 \left[ \begin{array}{cc}
ab & 0 \\
0 & ab \\
\end{array} \right].
\end{equation}
Say
$v = ce_{1} + c'e_{2} \in R^{2}$, where
$c,c' \in R$. Then
$\varphi(v) = (cab)e_{1} + (c'ab)e_{2}$.
That is
$$
\left[
\begin{array}{cc}
ab & 0 \\
0 & ab \\
\end{array} \right]
\left[
\begin{array}{c}
c  \\
c'  \\
\end{array} \right]
=
\left[
\begin{array}{c}
cab  \\
c'ab  \\
\end{array} \right]
.$$
To show $\varphi$ is
projectively zero, we need
one of the monomials
$cab$ or $c'ab$ to be $0$.
We have
\begin{equation}
\label{equation2}
0 = a^{2}bc + b^{2}ac' = ab(ca + c'b)
\mbox{ since } a^2 = b^2 = 0.
\end{equation}

Since $0$ is $2$-absorbing and
$ab(ca + c'b)$ = $0$,
then at least one of
$ab$, $b(ca + c'b)$, or $a(ca + c'b)$ is zero.
If $ab = 0$,
then both
$\pi_1\varphi(v)$ =
$cab$
and
$\pi_2\varphi(v)$ =
$c'ab$ are zero.

If $b(ca + c'b) = 0$, then since
$b^{2} = 0$,
we get $0$ = $cab$
= $\pi_1\varphi(v)$.
Similarly, if
$a(ca + c'b)$ = $0$,
we get
$0$ = $c'ab$
= $\pi_2\varphi(v)$.
Thus
$\varphi$ is projectively zero.

This will be useful since Lemma $3$ below will tell us
that $ab  =
0$, and thus
$(\sqrt{0}\,)^{2} = 0$.

\begin{defi}
{\em We say that
a linear map
$\varphi \colon R^{m} \to R^{m}$ is \textbf{upper-triangular} if for
each $j = 1,\ldots, m$,
$\pi_{i}\varphi(e_{j}) = 0$ whenever $i > j$.
}

\end{defi}

Lemma
\ref{upper.triangular.zero.on.diagonal}
shows that certain upper-triangular matrices must have
at least one zero on their diagonal.

\begin{lemm}
\label{upper.triangular.zero.on.diagonal}
Suppose that $\varphi \colon R^{m} \to R^{m}$ is a projectively zero
upper-triangular map. Then $\pi_{j}\varphi(e_{j}) = 0$ for some $j$.

\end{lemm}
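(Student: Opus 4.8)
The plan is to establish the contrapositive: assuming every diagonal entry $\pi_j\varphi(e_j)$ is nonzero, I will exhibit a single vector $v \in R^m$ for which every coordinate of $\varphi(v)$ is nonzero, contradicting the assumption that $\varphi$ is projectively zero. Writing $A_{ij} := \pi_i\varphi(e_j)$ for the matrix entries, upper-triangularity says $A_{ij} = 0$ whenever $i > j$, so for $v = \sum_j v_j e_j$ we have
\[
\pi_i\varphi(v) = \sum_{j \geq i} A_{ij} v_j = A_{ii} v_i + \sum_{j > i} A_{ij} v_j.
\]
The structural fact I would exploit is that $\pi_i\varphi(v)$ involves only the coordinates $v_i, v_{i+1}, \ldots, v_m$; this lets me assign the coordinates of $v$ one at a time, from the last down to the first, without ever disturbing a coordinate of $\varphi(v)$ already made nonzero.

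Concretely, I would proceed by reverse induction on $i = m, m-1, \ldots, 1$. Having fixed $v_{i+1}, \ldots, v_m$ (vacuously so when $i = m$), set $s_i := \sum_{j > i} A_{ij} v_j$, a determined element of $R$, and choose $v_i \in \{0, 1\}$ by the rule: $v_i = 0$ if $s_i \neq 0$, and $v_i = 1$ if $s_i = 0$. Since $\pi_i\varphi(v) = A_{ii}v_i + s_i$, this choice forces $\pi_i\varphi(v) = s_i \neq 0$ in the first case and $\pi_i\varphi(v) = A_{ii} \neq 0$ (using the standing assumption) in the second. Either way $\pi_i\varphi(v) \neq 0$, and only the ground elements $0$ and $1$ of $R$ are needed, so the argument is valid over any commutative unital ring.

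Finally I would check that the finished vector $v = \sum_{j=1}^m v_j e_j$ really has $\pi_i\varphi(v) \neq 0$ for all $i$: the value secured at step $i$ depends only on $v_i, \ldots, v_m$, and the later steps assign only $v_{i-1}, \ldots, v_1$, so nothing is undone. This contradicts projective zeroness and proves the lemma. I do not anticipate a genuine obstacle here; the only point demanding care is the bookkeeping of dependencies among coordinates, and that is precisely what upper-triangularity is there to control.
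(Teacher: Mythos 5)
Your proof is correct, and it takes a genuinely different route from the paper's. The paper argues by contradiction through a descending sequence of indices: assuming every diagonal entry is nonzero, it repeatedly invokes projective zeroness to define $j_{1} = \max\{i \mid \pi_{i}\varphi(e_{m}) = 0\}$, then $j_{2} = \max\{i \mid \pi_{i}\varphi(e_{j_{1}} + e_{m}) = 0\}$, and so on, and uses upper-triangularity to show $j_{1} > j_{2} > \cdots$ strictly, which is impossible for indices confined to $\{1,\ldots,m\}$. You instead prove the clean contrapositive: assuming every $A_{ii} = \pi_{i}\varphi(e_{i})$ is nonzero, you greedily build a single $\{0,1\}$-vector $v$ from the last coordinate down, choosing $v_{i} = 0$ when the already-determined tail sum $s_{i} = \sum_{j>i} A_{ij}v_{j}$ is nonzero and $v_{i} = 1$ when it vanishes, so that $\pi_{i}\varphi(v) = A_{ii}v_{i} + s_{i} \neq 0$ for every $i$; projective zeroness is then contradicted by one application at the end rather than invoked at every step. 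Both arguments exploit the same triangular structure (coordinate $i$ of the image depends only on coordinates $i,\ldots,m$ of the input), but yours is more direct and arguably easier to verify: it exhibits an explicit witness vector, uses only the ring elements $0$ and $1$ so no zero-divisor issues can arise, and isolates the content of the lemma as the statement that an upper-triangular map with all diagonal entries nonzero cannot be projectively zero. The paper's version, by contrast, spreads the use of the hypothesis across infinitely many steps and needs the boundedness of the index set to close the argument.
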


\begin{proof}
Let
$ j_{1}$
= $\max \{ i \in \{1, 2, \ldots, m\} \mid \pi_{i}\varphi(e_{m}) = 0 \}$.
Since
$\varphi$ is projectively zero, the above set is non-empty and so
$j_{1}$ is a positive integer.
Similarly we can define a
positive integer
$j_{2} = \max \{ i \in \{1, \ldots, m\} \mid \pi_{i}\varphi(e_{j_{1}} +
e_{m}) = 0 \}$.
Proceeding in the same way,
we have for each $k \in \mathbb{N}$,
a positive integer
$j_{k}$ with
\begin{equation}
\label{equation7}
j_{k} = \max \{ i \mid \pi_{i}\varphi(e_{j_{k-1}} +
\cdots + e_{j_{2}} + e_{j_{1}} + e_{m}) = 0\}.
\end{equation}

Suppose that for each $j \in \{1, \ldots, m\}$
that $\pi_{j}\varphi(e_{j}) \neq 0$.
We then claim that the sequence
$j_1, j_2, \ldots$
of positive integers
constructed above
is
strictly decreasing.
If not, then
for some $k \in \mathbb{N}$ we have either
$j_{k} < j_{k+1}$
or $j_{k}$ = $j_{k+1}$.
Suppose that $j_{k} < j_{k+1}$.
Now by definition
of $j_{k+1}$, we have
$$
0 = \pi_{j_{k+1}}\varphi(e_{j_{k}} + e_{j_{k-1}} +
\cdots + e_{j_{1}} + e_{m}) =$$
\begin{equation}
\label{equation8}
 \pi_{j_{k+1}}\varphi(e_{j_{k}}) +
\pi_{j_{k+1}}\varphi(e_{j_{k-1}}) + \cdots +
\pi_{j_{k+1}}\varphi(e_{j_{1}}) + \pi_{j_{k+1}}\varphi(e_{m})
\end{equation}
and the first term in (\ref{equation8}) is zero since $j_k < j_{k+1}$
and $\varphi$ is upper triangular.
So this is
$$
= 0 + \pi_{j_{k+1}}\varphi(e_{j_{k-1}}) + \cdots +
\pi_{j_{k+1}}\varphi(e_{j_{1}}) + \pi_{j_{k+1}}\varphi(e_{m}) =
\pi_{j_{k+1}}\varphi(e_{j_{k-1}} + \cdots + e_{j_{1}} + e_{m}).$$
But this contradicts how $j_{k}$ was defined
in equation
(\ref{equation7}).
 So the only way for
$j_{k+1} \geq j_{k}$ to happen is if $j_{k+1} = j_{k}$. But then
$$ 0 =
\pi_{j_{k+1}}\varphi(e_{j_{k}} + e_{j_{k-1}} + \cdots + e_{j_{1}}
+ e_{m})
 = \pi_{j_{k}}\varphi(e_{j_{k}} + e_{j_{k-1}} + \cdots +
e_{j_{1}} + e_{m})$$
$$ = \pi_{j_{k}}\varphi(e_{j_{k}}) +
\pi_{j_{k}}\varphi(e_{j_{k-1}} + \cdots + e_{j_{1}} + e_{m})
 =
\pi_{j_{k}}\varphi(e_{j_{k}}) + 0 = \pi_{j_{k}}\varphi(e_{j_{k}}),$$
which contradicts our
assumption
that $\pi_{j}\varphi(e_{j}) \neq 0$ for any $j$.
Thus the $\{j_{k}\}$ form
a strictly decreasing sequence,
a contradiction
since $j_k \in \{1, \ldots, m\}$ for each $k$.
\end{proof}

We will need some partial orderings on monomials.

\begin{defi}
\normalfont
Let $x_{1},\ldots,x_{n}$ be indeterminates over a ring $R$. The
\textbf{(unordered) multi-degree} of a monomial $M =
x^{k_{1}}_{1}\cdots x_{n}^{k_{n}}$ in $R[x_{1},\ldots,x_{n}]$
is the $n$-tuple
$\alpha = (k_{\sigma(1)},\ldots, k_{\sigma(n)}) \in
\mathbb{N}^{n}_{0},$
where $\sigma$ is a permutation of $\{1,\ldots,n\}$ such that
$k_{\sigma(1)} \geq
\cdots \geq k_{\sigma(n)}.$ Denote this $n$-tuple by
$\text{multideg}(M)$. We'll also write $|\alpha|$ for the
\textbf{degree} $\sum_{i=1}^n k_{i}$ of the monomial $M$.

\end{defi}

\begin{exam}
\normalfont
Suppose $x,y,z$ are indeterminates over $R$. Then
\[\text{multideg}(x^{2}y^{4}z^{2}) = \text{multideg}(x^{4}y^{2}z^{2}) =
 (4,2,2).\]
\end{exam}

Suppose $J  =
(a_{1},\ldots,a_{n})R$ is a finitely generated ideal of a ring $R$. If
$\underline{x} = x_{1},\ldots,x_{n}$ is a sequence of indeterminates
over $R$, we have a natural $R$-algebra homomorphism
$f : R[\underline{x}] \to R$,
where
$x_{i} \mapsto a_{i}$.
 Let $H$
= $(\underline{x})R[\underline{x}]$. Then under this
map, $f(H) = J$.
Moreover, for any $k \in \mathbb{N}$, we
have
$f(H^k) = J^{k}$.
Then $H^k$
is just the ideal of $R[\underline{x}]$ generated by
all monomials $M$ in $R[\underline{x}]$ of degree $k$.
Now grouping
together all monomials of degree $k$ that have the same (unordered)
multi-degree, we may write
$$H^{k} = \sum_{\alpha \in
\mathbb{N}^{n}_{0},|\alpha| = k}
H_\alpha^k,$$
where
$H_\alpha^k$
is the ideal of $R[\underline{x}]$
generated by all monomials $M$ with $\deg(M) = k$ and
$\text{multideg}(M) = \alpha$.
Thus $J^{k}$ =
$f(H^k)$ =
$f(\sum H_\alpha^k)$
= $\sum f(H_\alpha^k)$.
For $\alpha \in \mathbb{N}^{n}_{0}$ with
$|\alpha| = k$,
let $J^{k}_{\alpha}  =
f(H_\alpha^{k})$.
So that we may write \[ J^{k}  =
\sum_{\alpha \in \mathbb{N}_{0}^{n}, |\alpha| = k} J_{\alpha}^{k}. \]

\begin{exam}
Let $x,y,z$ be indeterminates over a ring $R$. Then in the above notation,
\[ H^{3} = H_{(3,0,0)}^{3} + H_{(2,1,0)}^{3} + H_{(2,0,1)}^{3} + H_{(1,2,0)}^{3} + H_{(1,1,1)}^{3} + H_{(1,0,2)}^{3}  + H_{(0,3,0)}^{3}   + H_{(0,2,1)}^{3}  + H_{(0,1,2)}^{3}  + H_{(0,0,3)}^{3}  \]\[ = (x^{3},y^{3},z^{3}) + (x^{2}y,x^{2}z,y^{2}x,y^{2}z,z^{2}x,z^{2}y) + (0) + (0) + (xyz)  + (0) + (0) + (0) + (0) + (0). \]
For instance, $H_{(2,0,1)}^{3} = 0$ since there are no monomials with (unordered) multi-degree $(2,0,1)$; the (unordered) multi-degree of a monomial $M \in R[x,y,z]$ is always of the form $(n,m,\ell)$, where $n \geq m \geq \ell$. 
\end{exam}

Using this notation, we are now ready to prove the main conjecture.

\begin{theo} \label{main_theorem}
Let $0$ be an $n$-absorbing ideal in a ring $R$.
Then $\left(\sqrt{0}\:\right)^{n} = 0$.

\end{theo}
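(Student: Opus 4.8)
The plan is to prove $(\sqrt{0})^n = 0$ by showing that for any $a_1, \ldots, a_n \in \sqrt{0}$, the product $a_1 \cdots a_n = 0$. The strategy follows the template set up in the $2$-absorbing example: for a fixed choice of nilpotent generators, I would construct an explicit $R$-linear map $\varphi \colon R^m \to R^m$ (for a suitable $m$) whose matrix entries are monomials in the $a_i$, arrange for it to be both \emph{upper-triangular} and \emph{projectively zero}, and then invoke Lemma~\ref{upper.triangular.zero.on.diagonal} to conclude that some diagonal entry vanishes. The construction should be rigged so that the diagonal entries are all equal (or all a multiple of) the target product $a_1 \cdots a_n$, forcing that product to be zero.

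Since each $a_i \in \sqrt{0}$ is nilpotent and $0$ is $n$-absorbing, Lemma~2 (taking $k$ large) and the cited Anderson result give us $a_i^{n} = 0$ for each $i$; more usefully, raising to high powers kills the generators. I would work inside the polynomial ring $R[\underline{x}]$ with $x_i \mapsto a_i$ and exploit the decomposition $J^k = \sum_\alpha J^k_\alpha$ by multi-degree that the excerpt so carefully sets up. The indexing set of the matrix $\varphi$ should be the set of relevant multi-degrees $\alpha$ with $|\alpha| = n$ (or some convenient degree), ordered lexicographically so that ``upper-triangular'' corresponds to the lex order $\geq_{\text{lex}}$ on $\mathbb{N}^n_0$. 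Each matrix entry would encode how a monomial of one multi-degree maps under multiplication by the $a_i$ into pieces of another multi-degree; the nilpotency relations $a_i^n = 0$ should make entries below the diagonal (in lex order) vanish, giving upper-triangularity, while the diagonal entries all reduce to a scalar multiple of $a_1 \cdots a_n$.

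To verify projective zero-ness, I would take an arbitrary vector $v \in R^m$ with coordinates $c_\alpha \in R$ and compute, analogously to equation~(\ref{equation2}), a single product of $n+1$ elements of $R$ that lies in $0$ because it expands (after applying the nilpotency relations $a_i^n = 0$) into a telescoping or symmetric expression that collapses to zero. Applying the $n$-absorbing hypothesis to that product of $n+1$ factors yields that some subproduct of $n$ factors vanishes; tracing through which factor was dropped, together with the nilpotency relations, should force one coordinate $\pi_i \varphi(v)$ to be zero, exactly as each of the three cases in the $2$-absorbing example collapsed to a vanishing projection.

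The main obstacle I anticipate is the bookkeeping in the general construction: choosing the correct finite set of multi-degrees to index the matrix, defining the entries so that the upper-triangularity and the ``all diagonal entries equal $a_1 \cdots a_n$'' properties hold simultaneously, and — hardest of all — producing the single degree-$(n+1)$ relation in $R$ that feeds the $n$-absorbing hypothesis and then correctly identifying, in every case of which factor the absorbing property drops, that the corresponding projection vanishes after applying $a_i^n = 0$. Getting this combinatorial matching to work uniformly for arbitrary $n$, rather than just $n=2$, is where the real content of the argument lies; once the map is built and shown to be projectively zero and upper-triangular, Lemma~\ref{upper.triangular.zero.on.diagonal} finishes the proof immediately.
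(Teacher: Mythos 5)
Your proposal correctly identifies the template the paper uses --- build an upper-triangular, projectively zero map whose diagonal entries equal the target product, then invoke Lemma~\ref{upper.triangular.zero.on.diagonal} --- but the two load-bearing steps you dismiss as ``bookkeeping'' are precisely where your sketch fails. First, you index the matrix by multi-degrees and claim upper-triangularity should follow from the nilpotency relations $a_i^n = 0$ alone. That is false: nilpotency only kills monomials in which some exponent reaches $n$, not all monomials of lex-larger multi-degree. For $n = 3$, the monomial $a_1^2 a_2$ has multi-degree $(2,1,0) >_{\text{lex}} (1,1,1)$, yet $a_i^3 = 0$ gives no information about it. Second, you explicitly concede that you cannot produce the degree-$(n+1)$ relation needed for projective zero-ness; without it the $n$-absorbing hypothesis never enters and the argument cannot start.

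The paper supplies both missing pieces with a single device you do not have: a descending induction on the finite set $\Delta = \{\alpha \in \mathbb{N}^n_0 \mid n \le |\alpha| \le n^2 - n\}$ of multi-degrees, totally ordered by total degree first and then lex (the order $\succeq$), with base case $J^k_\alpha = 0$ for $|\alpha| = k \ge n^2 - n + 1$ coming from nilpotency (there some exponent must reach $n$). In the inductive step for $\alpha$ with $|\alpha| = k$, one fixes a single monomial generator $g = y_1^{k_1}\cdots y_m^{k_m}$ of $J^k_\alpha$ and indexes the matrix by the $m$ \emph{distinct variables occurring in $g$} --- not by multi-degrees --- with entries $\frac{y_j}{y_i}\,g$. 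A below-diagonal entry ($i > j$) has multi-degree strictly lex-larger than $\alpha$ with the same total degree $k$, hence lies in some $J^k_\beta$ with $\beta \succ \alpha$, which vanishes \emph{by the induction hypothesis}; this, not nilpotency, is what yields upper-triangularity. The same induction hypothesis gives $J^{k+1} = 0$, hence $gJ = 0$, and the relation you were missing is simply $g\bigl(\sum_j c_j y_j\bigr) = 0$: a product of $k+1 \ge n+1$ ring elements, to which the $n$-absorbing property (together with Lemma~2) applies, so some factor can be dropped --- either the linear combination, giving $g = 0$ outright, or some $y_i$, giving $\pi_i\varphi(v) = 0$ --- and projective zero-ness follows. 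So the gap in your proposal is not combinatorial bookkeeping: it is the induction over multi-degrees, which is the mechanism that generates both properties you hoped to extract from nilpotency and an unspecified identity.
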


\begin{proof}
We assume $n > 1$, since the $n = 1$ case is trivial. Fix $a_{1},\ldots, a_{n} \in \sqrt{0}$ and let $J  =
(a_{1},\ldots,a_{n})R$. Observe that $a_{1} \cdots a_{n} \in J_{(1,1,\ldots, 1)}^{n}$, so that it suffices to show $J_{(1,1,\ldots,1)}^{n} = 0$. Even better, we aim to show \begin{equation} \label{inductionsetup}
J_{\alpha}^{k} = 0 \text{ for all }\alpha \in \mathbb{N}^{n}_{0} \text{ with }|\alpha| = k \geq n.
\end{equation} Since $a^{n}_{i} = 0$ for all $i \in \{1,\ldots, n\}$, we have $J^{k}_{\alpha} = 0$ for all $\alpha \in \mathbb{N}^{n}_{0}$ with $|\alpha| = k \geq n^{2} - n +1$. To prove (\ref{inductionsetup}), it thus remains to show \begin{equation} \label{inductionsetup2}
J_{\alpha}^{k} = 0 \text{ for all } \alpha \in \Delta \text{ with }|\alpha| = k,\end{equation} where \[\Delta := \{ \alpha \in \mathbb{N}^{n}_{0} \mid n^{2} - n \geq |\alpha| \geq n  \}.\] Now for $\alpha, \beta \in \mathbb{N}^{n}_{0}$, write $\beta \succeq \alpha$ if one of the following holds: \begin{enumerate}
\item $|\beta| > |\alpha|$ or
\item $|\beta| = |\alpha|$ and $\beta \geq_{\text{lex}} \alpha$.
\end{enumerate}

It follows that $\succeq$ defines a total ordering on $\Delta$. We prove that (\ref{inductionsetup2}) holds by means of an induction on $\Delta$ with respect to the total ordering $\succeq$. The largest element of $\Delta$ (with respect to $\succeq$) is $\gamma$, where $\gamma = (n^{2}-n,0,0\ldots,0)$. So \[ J_{\gamma}^{n^{2}-n} = (a^{n^{2}-n}_{1},a^{n^{2}-n}_{2},\ldots, a^{n^{2}-n}_{n}) = 0,\] since $n^{2} - n \geq n$ and $a^{n}_{i} = 0$ for all $i \in \{1,\ldots,n\}$. Now, say $\alpha \in \Delta$ with $|\alpha| = k$ and assume that $J_{\beta}^{s} = 0$ for any $\beta \succ\alpha$ with $\beta \in \Delta$ and $|\beta| = s$. We prove $J_{\alpha}^{k} = 0$.

Recall that $J^{k}_{\alpha}$ is generated by elements of the form
$g  = f(M)$,
where $M$ is a monomial of $R[\underline{x}]$ with
$\text{multideg}(M) = \alpha$ and $|\alpha| = k$. So write $g =
a^{k_{1}}_{\ell_{1}} \cdots a_{\ell_{m}}^{k_{m}}$, where each
$k_{t} > 0$, $\Sigma_{t=1}^{m}k_{t}=k$, 
and each $a_{\ell_{j}}$ is a distinct element of
$\{a_{1},\ldots,a_{n}
\}$. Set $y_{j} = a_{\ell_{j}}$ for each $j \in \{1,\ldots, m\}$
and we may assume
without loss of generality,
that $k_t \geq k_{t+1}$ for each $t$.
So $g = y^{k_{1}}_{1} \cdots y_{m}^{k_{m}}$.

Let $\textbf{C}$ be the $m \times m$ matrix
$\Big( \displaystyle \frac{y_{j}}{y_{i}} g \Big)_{i,j}$. Note that $\frac{y_{j}}{y_{i}} g$ is an element of $R$, since $k_{i}$ is positive.
So
$$
C =
\left[
\begin{array}{ccccc}
\frac{y_1}{y_1}g    & \frac{y_2}{y_1}g   & \cdots &\frac{y_{m-1}}{y_1}g    &\frac{y_m}{y_1}g \\
\frac{y_1}{y_2}g    & \frac{y_2}{y_2}g   & \cdots &\frac{y_{m-1}}{y_2}g    &\frac{y_m}{y_2}g \\
\vdots              & \vdots             & \vdots & \vdots                 & \vdots          \\
\frac{y_1}{y_{m-1}}g&\frac{y_2}{y_{m-1}}g& \cdots &\frac{y_{m-1}}{y_{m-1}}g&\frac{y_{m-1}}{y_m}g \\
\frac{y_1}{y_m}g    &\frac{y_2}{y_m}g    & \cdots & \frac{y_{m-1}}{y_m}g   &\frac{y_m}{y_m}g
\end{array}
\right]
=
\left[
\begin{array}{ccccc}
\frac{y_1}{y_1}g & \frac{y_2}{y_1}g & \cdots &\frac{y_{m-1}}{y_1}g  & \frac{y_m}{y_1}g \\
 0               & \frac{y_2}{y_2}g & \cdots &\frac{y_{m-1}}{y_2}g &\frac{y_m}{y_1}g\\
\vdots           & \vdots           & \vdots & \vdots              &\vdots          \\
0                & 0                & \cdots & \frac{y_{m-1}}{y_{m-1}}g&\frac{y_m}{y_{m-1}}g \\
0                & 0                & \cdots & 0                   &\frac{y_m}{y_m}g
\end{array}
\right].
$$
Indeed if $i > j$
we may write
$\displaystyle \frac{y_{j}}{y_{i}} g = f(M')$,
where
\begin{displaymath}
M'
= \frac{x_{\ell_{j}}}{x_{\ell_{i}}}(x_{\ell_{1}}^{k_1}\cdots x_{\ell_{j}}^{k_j} \cdots x_{\ell_{i}}^{k_i}\cdots x_{\ell_{m}}^{k_m})
= x_{\ell_{1}}^{k_1}\cdots x_{\ell_{j}}^{k_j+1} \cdots x_{\ell_{i}}^{k_i-1}\cdots x_{\ell_{m}}^{k_m}
\end{displaymath}
is a
monomial of $R[\underline{x}]$ with $ \beta = \text{multideg}(M')
>_{\text{lex}} \text{multideg}(M) = \alpha$ and $|\beta| = |\alpha| = k$. Thus $\beta \in \Delta$ with $\beta \succ \alpha$, and hence $\displaystyle
\frac{y_{j}}{y_{i}} g \in J_{\beta}^{k} = 0$.
So
$\textbf{C}$
is upper-triangular.
Let $\varphi \colon R^{m} \to R^{m}$ be the
$R$-linear map defined by
$v \mapsto \textbf{C}v$.
Then $\varphi$ is upper triangular.
Moreover, $\varphi$ is projectively zero.
Indeed, given any
$v = \sum c_{j}e_{j} \in R^m$
we have that for each $i$,
\[ \pi_{i}\varphi(v) = \sum^{m}_{j=1} c_{j}\frac{y_{j}}{y_{i}}g. \]
On the other hand,
we note $J^{k+1} = 0$ by our induction hypothesis (or by our previous remarks if $k = n^{2} - n$), so $g \in (J^{k+1} \colon J) =
(0 \colon J)$.
Then $\displaystyle g \Big(\sum^{m}_{j=1}
c_{j}y_{j}\Big) \in gJ = 0$.
Now since $0$ is $n$-absorbing and $g$ is the product of $k
\geq n$ elements, we must have that for some $i$ (if $g$ is not zero),
\[ 0 =
\frac{1}{y_{i}}g \Big(\sum^{m}_{j=1} c_{j}y_{j}\Big) =
\sum^{m}_{j=1} c_{j}\frac{y_{j}}{y_{i}}g = \pi_{i}\varphi(v). \] So
$\varphi$ is a projectively zero upper-triangular map.
Thus by Lemma
\ref{upper.triangular.zero.on.diagonal},
$\pi_{j}\varphi(e_{j}) = 0$ for some $j$. But
$\pi_{j}\varphi(e_{j}) = \displaystyle\frac{y_{j}}{y_{j}}g = g$.
Thus $g$ = $0$ and the induction is complete.
\end{proof}

\begin{coro}
If $I$ is $3$-absorbing with $\sqrt{I} = P$ a prime ideal and
$x \in P$, then $I_{x} = (I :_{R} x)$ is a $2$-absorbing ideal of $R$.

\end{coro}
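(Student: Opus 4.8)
The plan is to verify the definition of $2$-absorbing for $I_{x}=(I:_{R}x)$ by hand, using the main theorem in the form $P^{3}=(\sqrt{I}\,)^{3}\subseteq I$ together with the primeness of $P$ and the $3$-absorbing hypothesis on $I$. So I would take $a,b,c\in R$ with $abc\in(I:x)$, i.e. $abcx\in I$, and try to show that one of $ab,ac,bc$ lies in $(I:x)$, equivalently that one of $abx,acx,bcx$ lies in $I$. The first move is to apply the $3$-absorbing property to the four-fold product $abcx\in I$: this forces one of $abc,abx,acx,bcx$ into $I$. In the last three cases we are done immediately, so the only genuine case is $abc\in I$. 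Since $abc\in I\subseteq P$ and $P$ is prime, some factor—say $a$, after relabelling, which is harmless as the whole problem is symmetric in $a,b,c$—lies in $P$; as $x\in P$ too, this gives $ax^{2}\in P^{3}\subseteq I$ and $x^{3}\in P^{3}\subseteq I$, and these are the only consequences of the main theorem the argument will use.

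The main obstacle is that once $abc\in I$, the obvious attempts to re-apply the $3$-absorbing property fail to say anything: every four-fold product in $I$ that one builds out of $a,b,c,x$ contains either $abc$ or a product of three elements of $P$ as a sub-triple, and that sub-triple is already in $I$, so the $3$-absorbing conclusion is satisfied trivially and yields no new information. To defeat this I would perturb a factor. Assuming for contradiction that none of $abx,acx,bcx$ lies in $I$, I apply the $3$-absorbing property to $\{a,b,c+x,x\}$, whose product $ab(c+x)x=abcx+abx^{2}$ lies in $I$ (both summands do, using $abc\in I$ and $ax^{2}\in I$). Because $ab(c+x)\equiv abx$ and $a(c+x)x\equiv acx\pmod{I}$, the only sub-triple that can fall into $I$ is $b(c+x)x=bcx+bx^{2}$, so I obtain the new relation $bcx+bx^{2}\in I$. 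Perturbing the other unprotected factor instead, i.e. applying the property to $\{a,b+x,c,x\}$, symmetrically yields $bcx+cx^{2}\in I$.

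Finally I would feed these two relations into one last application of the $3$-absorbing property, to the product $\{b,c,x,x\}$. Its value $bcx^{2}=(bcx+bx^{2})x-bx^{3}$ lies in $I$, since $bcx+bx^{2}\in I$ and $bx^{3}\in P^{3}\subseteq I$. On the other hand its three distinct sub-triples are $bcx$, $bx^{2}$, and $cx^{2}$, and the two relations above give $bx^{2}\equiv -bcx$ and $cx^{2}\equiv -bcx\pmod{I}$; since $bcx\notin I$ by assumption, none of the three sub-triples lies in $I$. Thus a product of four elements lies in $I$ while no product of three of them does, contradicting that $I$ is $3$-absorbing. Hence one of $abx,acx,bcx\in I$, which is exactly the statement that $(I:x)$ is $2$-absorbing.

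I expect the perturbation step to be the crux: the key realization is that replacing $c$ by $c+x$ is precisely what destroys the trivially-in-$I$ sub-triples while keeping the four-fold product in $I$, thereby extracting a genuinely new relation rather than a tautology. Everything after that—producing the mate relation by symmetry and closing with the product $\{b,c,x,x\}$—is bookkeeping with $P^{3}\subseteq I$ and the congruences modulo $I$. It is worth noting that the argument never uses $b,c\notin P$; primeness of $P$ is needed only to place one factor of $abc$ in $P$, and the main theorem enters solely through $ax^{2},x^{3}\in P^{3}\subseteq I$.
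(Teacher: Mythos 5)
Your proof is correct: all the memberships you need do check out ($ax^{2}, x^{3} \in P^{3} \subseteq I$ justify $ab(c+x)x \in I$, $a(b+x)cx \in I$, and, with your first relation, $bcx^{2} \in I$), and under the contradiction hypothesis each application of the $3$-absorbing property really does have exactly one surviving case. Your argument shares its skeleton with the paper's: reduce to $abc \in I$ by one application of $3$-absorbing to $abcx$, use primeness of $P$ only to rename as $a$ a factor lying in $P$, invoke Theorem~\ref{main_theorem} only through $P^{3} \subseteq I$, and mine carefully chosen four-fold products with perturbed factors. But the execution is genuinely different. The paper argues directly, never assuming a contradiction: it perturbs $a$ by $x^{2}$, applying $3$-absorbing to $xbc(a+x^{2}) \in I$, which in the one nontrivial case yields the monomial membership $bcx^{2} \in I$; then $3$-absorbing on $b \cdot c \cdot x \cdot x$ leaves the branches $bx^{2} \in I$ or $cx^{2} \in I$, and each branch is closed by a second perturbation ($abx(x+c)$, resp.\ $acx(x+b)$), so every terminal case of the resulting tree explicitly names which of $abx, acx, bcx$ lands in $I$. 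You instead perturb $c$ and $b$ by $x$, and your standing hypothesis that none of $abx, acx, bcx$ lies in $I$ kills three of the four cases in each application, leaving the affine relations $bcx+bx^{2} \in I$ and $bcx+cx^{2} \in I$; these then make the multiset $\{b,c,x,x\}$ an outright counterexample to $3$-absorbing. Your route buys symmetry and economy --- a linear argument, no branching, with only three applications of $3$-absorbing after the initial reduction, and with the two key relations produced by one symmetric trick rather than by ad hoc case-chasing. The paper's route buys directness: each branch ends with an explicit conclusion rather than a contradiction. At bottom both proofs rest on the same realization you flag as the crux: perturbing a factor is what destroys the sub-triples that lie in $I$ for trivial reasons and forces the $3$-absorbing hypothesis to give out new information.
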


\begin{proof}
We must show that if $abc \in I_{x}$, then $ab,ac$ or $bc \in I_{x}$.
Since $I$ is $3$-absorbing and $abcx \in I$,
then
either $abc \in I$,
$abx \in I$, $acx \in I$
or $bcx \in I$. So we assume $abc \in I$.

Without loss of generality, we can assume $a \in P$ as well. Since $P^{3} \subseteq I$ by Theorem \ref{main_theorem},
$xbc(a + x^2) \in I$, so that since $I$ is $3$-absorbing, we're left with $4$ possibilities: 
$xbc, xc(a+x^{2}), xb(a+x^{2})$, or $bc(a+x^{2}) \in I$. From the first three choices,
we can conclude $xbc, xca,$ or $xba \in I$ respectively, so that we may assume $bc(a+x^{2}) \in I$,
from which it follows that $bcx^{2} \in I$. Again since $I$ is $3$-absorbing, this implies $bcx$, $bx^{2}$
or $cx^{2} \in I$. So we may deduce $bx^{2}$ or $cx^{2} \in I$. If $bx^{2} \in I$, then $abx(x+ c) \in I$ implies 
that one of $abx, ab(x+c), bx(x+c),$ or $ax(x+c) \in I$. In any of these cases, we can deduce either $abx, bcx$ or $acx \in I$.
On the other hand, if $cx^{2} \in I$, then $acx(x+ b) \in I$ implies 
that one of $acx, ac(x+b), cx(x+b),$ or $ax(x+b) \in I$. In any of these cases, we can deduce either $abx, bcx$ or $acx \in I$,
and we're done. \end{proof}

\begin{coro}
Suppose that $I$ is a $3$-absorbing ideal of a ring $R$ and $\sqrt{I} = P$ is prime. If $x,y,z \in
P$, then either $I_{xz} \subseteq I_{xy}$ or $I_{xy} \subseteq I_{xz}$.
Furthermore, $I_{xy}$ is $1$-absorbing.
\end{coro}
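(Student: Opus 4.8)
The plan is to reduce both assertions to the $2$-absorbing ideal $J := I_{x} = (I :_{R} x)$, which is $2$-absorbing by the previous Corollary (since $\sqrt{I} = P$ is prime and $x \in P$). Writing $I_{xy} = (J :_{R} y)$ and $I_{xz} = (J :_{R} z)$, every membership in play translates into $J$: for instance $ab \in I_{xy}$ means $aby \in J$, $a \in I_{xy}$ means $ay \in J$, and $I_{xy} \subseteq I_{xz}$ means $ay \in J \Rightarrow az \in J$ for all $a$. I will also use repeatedly that $y^{2}, z^{2}, yz \in J$, since $xy^{2}, xz^{2}, xyz \in P^{3} \subseteq I$ by Theorem \ref{main_theorem}, and that $y, z \in \sqrt{J}$.

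I would first dispatch the final clause, since it isolates the key trick. Suppose $ab \in I_{xy}$, i.e. $aby \in J$. As $J$ is $2$-absorbing, one of $ab, ay, by$ lies in $J$; the last two give $a \in I_{xy}$ or $b \in I_{xy}$ and we are done, so assume $ab \in J$. The trick is then to feed the triple $(a,\, b+y,\, y)$ into the $2$-absorbing hypothesis. Its product is $aby + ay^{2} \in J$, so one of the pairwise products $a(b+y) = ab+ay$, $ay$, $(b+y)y = by+y^{2}$ lies in $J$; using $ab \in J$ and $y^{2} \in J$, each of these collapses to either $ay \in J$ or $by \in J$, which is exactly $a \in I_{xy}$ or $b \in I_{xy}$. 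Hence $I_{xy}$ is $1$-absorbing.

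For the comparability statement I would argue by contradiction, assuming $I_{xy}$ and $I_{xz}$ are incomparable. This produces witnesses $a$ with $ay \in J$, $az \notin J$ and $b$ with $bz \in J$, $by \notin J$. First I force $ab \in J$: the product $ab(y+z) = aby + abz$ lies in $J$, and of the three $2$-absorbing alternatives $ab$, $a(y+z) = ay+az$, $b(y+z) = by+bz$, the latter two would force the forbidden $az \in J$ respectively $by \in J$, so $ab \in J$. The decisive step is then the triple $(a,\, b+z,\, y+z)$. Expanding, its product is $aby + abz + ayz + az^{2}$, each summand a known multiple of $ay$, $bz$, or $z^{2}$, so the product lies in $J$; yet its three pairwise products $a(b+z) = ab+az$, $a(y+z) = ay+az$, and $(b+z)(y+z) = by + bz + yz + z^{2}$ reduce modulo the known $J$-memberships (including $ab \in J$) to $az$, $az$, and $by$ respectively, none of which is in $J$. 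A triple whose product is in $J$ but whose pairwise products all avoid $J$ contradicts that $J$ is $2$-absorbing, giving the desired comparability.

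The main obstacle, and the reason the auxiliary factors $b+y$, $b+z$, $y+z$ must be chosen so carefully, is the pervasive phenomenon of \emph{automatic} membership: a product that lands in $J$ merely because some sub-product is already known to lie in $J$ satisfies the absorbing hypothesis vacuously and yields nothing. The combinations above are engineered precisely so that each relevant pairwise product differs from a genuinely forbidden element ($az$ or $by$) by terms already known to lie in $J$, so that the $2$-absorbing alternatives are forced to be either useful or impossible rather than vacuous. Securing $ab \in J$ before invoking the decisive triple is the enabling move; the remainder is the routine bookkeeping of checking that each expanded summand lies in $J$.
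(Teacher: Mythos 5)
Your proof is correct, but it takes a genuinely different route from the paper's. Both arguments begin with the same reduction: $J := I_{x}$ is $2$-absorbing by the preceding corollary, and $I_{xy} = (J :_{R} y)$, $I_{xz} = (J :_{R} z)$. From there, however, the paper simply cites Badawi \cite{Badawi} (Theorems 2.5 and 2.6), which assert that for a $2$-absorbing ideal $J$ the colon ideals $(J :_{R} a)$ with $a \in \sqrt{J} \setminus J$ form a totally ordered family of $1$-absorbing ideals; after discarding the trivial case $xy \in I$ or $xz \in I$ (where the colon ideal is all of $R$), the corollary follows because $y, z \in \sqrt{J}$. You instead reprove from scratch exactly the special cases of Badawi's theorems that are needed, using only the $2$-absorbing property of $J$ together with the memberships $y^{2}, z^{2}, yz \in J$, which you correctly derive from $P^{3} \subseteq I$, i.e., from Theorem \ref{main_theorem}. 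Your test triples are well chosen and I verified each one: securing $ab \in J$ and then feeding in $(a,\, b+y,\, y)$ settles that $(J :_{R} y)$ is $1$-absorbing, and the pair of triples $(a,\, b,\, y+z)$ and $(a,\, b+z,\, y+z)$ settles comparability --- in the decisive triple all three pairwise products reduce modulo known elements of $J$ to $az$, $az$, and $by$, none of which can lie in $J$ under the incomparability hypothesis, contradicting $2$-absorption. What the paper's route buys is brevity and an explicit pointer to where these structural facts live in the literature; what your route buys is self-containment (the corollary no longer depends on \cite{Badawi} at all) and, as a minor side benefit, your contradiction framework absorbs the degenerate cases $xy \in I$ or $xz \in I$ automatically, with no need for the paper's preliminary reduction to $xy, xz \notin I$.
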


\begin{proof}
We can assume $xy,xz \notin I$, otherwise there's nothing to do since $I_{xy} = I_{xz} = R$. We have
that $I_{x}$ is $2$-absorbing by the previous result, so that the set
$I_{xa} = \{ (I :_{R} xa) \mid a \in \sqrt{I_{x}} \backslash I_{x} \}$
is a totally ordered set of $1$-absorbing ideals \cite[Theorems
2.5,2.6]{Badawi}. Since $z,y \in \sqrt{I} \subseteq \sqrt{I_{x}}$ and $z,y \notin I_{x}$ by our assumption, 
the claim follows.
\end{proof}

\section*{Acknowledgements}

The authors thank Youngsu Kim and Paolo Mantero for their helpful comments and suggestions writing this note.

\end{document}